% SIAM Article Template
\documentclass[final,onefignum,onetabnum]{siamart171218}

% Information that is shared between the article and the supplement
% (title and author information, macros, packages, etc.) goes into
% ex_shared.tex. If there is no supplement, this file can be included
% directly.

% SIAM Shared Information Template
% This is information that is shared between the main document and any
% supplement. If no supplement is required, then this information can
% be included directly in the main document.

% Packages and macros go here
\usepackage{lipsum}
\usepackage{amsfonts}
\usepackage{graphicx}
\usepackage{epstopdf}
\usepackage{algorithmic}
\ifpdf
  \DeclareGraphicsExtensions{.eps,.pdf,.png,.jpg}
\else
  \DeclareGraphicsExtensions{.eps}
\fi

% Add a serial/Oxford comma by default.

% Used for creating new theorem and remark environments
\newsiamremark{remark}{Remark}
\newsiamremark{hypothesis}{Hypothesis}
\crefname{hypothesis}{Hypothesis}{Hypotheses}
\newsiamthm{claim}{Claim}

% Sets running headers as well as PDF title and authors
\headers{Random variable transformed by Chebyshev Polynomials}{Javier Chico V\'{a}zquez, Andrew J. Horning}

% Title. If the supplement option is on, then "Supplementary Material"
% is automatically inserted before the title.
\title{The limiting distribution of a random variable transformed by Chebyshev Polynomials}%\thanks{\funding{}}}

% Authors: full names plus addresses.
\author{Javier Chico V\'{a}zquez\thanks{Imperial College London}\thanks{Mathematical Institute, University of Oxford}
\and Andrew J. Horning \thanks{Massachusetts Institute of Technology}}

\usepackage{amsopn}

%% Added on Overleaf: enabling xr
\makeatletter
\newcommand*{\addFileDependency}[1]{% argument=file name and extension
  \typeout{(#1)}% latexmk will find this if $recorder=0 (however, in that case, it will ignore #1 if it is a .aux or .pdf file etc and it exists! if it doesn't exist, it will appear in the list of dependents regardless)
  \@addtofilelist{#1}% if you want it to appear in \listfiles, not really necessary and latexmk doesn't use this
  \IfFileExists{#1}{}{\typeout{No file #1.}}% latexmk will find this message if #1 doesn't exist (yet)
}
\makeatother

\newcommand*{\myexternaldocument}[1]{%
    \externaldocument{#1}%
    \addFileDependency{#1.tex}%
    \addFileDependency{#1.aux}%
}
%%% END HELPER CODE
%%% Local Variables: 
%%% mode:latex
%%% TeX-master: "ex_article"
%%% End: 

% Optional PDF information
\ifpdf
\hypersetup{
  pdftitle={The limiting distribution of a random variable transformed by chebyshev polynomials},
  pdfauthor={J. Chico V\'{a}zquez and A.J. Horning}
}
\fi

% The next statement enables references to information in the
% supplement. See the xr-hyperref package for details.

%% Use \myexternaldocument on Overleaf
\myexternaldocument{ex_supplement}

% FundRef data to be entered by SIAM
%<funding-group>
%<award-group>
%<funding-source>
%<named-content content-type="funder-name"> 
%</named-content> 
%<named-content content-type="funder-identifier"> 
%</named-content>
%</funding-source>
%<award-id> </award-id>
%</award-group>
%</funding-group>

\begin{document}

\maketitle

% REQUIRED
\begin{abstract}
  In this paper we present the result of successively applying a Chebyshev polynomial to a continuous random variable. In particular we show that under mild assumptions the limiting distribution will be the same as the weight with respect to which Chebyshev polynomials are orthogonal, namely $\sim (1-t^2)^{-\frac{1}{2}}$.
\end{abstract}

% REQUIRED
\begin{keywords}
  Chebyshev polynomials, Convergence in distribution
\end{keywords}

% REQUIRED
%\begin{AMS}
  %68Q25, 68R10, 68U05
%\end{AMS}

\section{Introduction}
This paper investigates the behaviour of a random variable $X$ when it is transformed by Chebyshev polynomials. Mathematically, we are interested in the distribution of $T_k(X)$, in particular on how this behaves as $k\rightarrow\infty$. In particular, we will show that this converges in distribution to a continuous random variable whose density function is the same as the weight function under which Chebyshev polynomials are orthogonal. Further, we show that the limiting distribution is invariant when transformed by $T_k$.

The structure of this paper is the following: First we review necessary concepts in \cref{sec:prelim}. Next we state the main result in \cref{sec:main}, with an interesting asymptotic analysis of the error and the statement for a lemma which serves as a cornerstone for the proof. This is followed by a presentation of the numerical results. In the next section a rigorous proof for the theorem is presented, and the final remarks include results that seem to hold from the numerics but have not been shown formally.. In the final appendix the proof for the lemma is presented. 

\section{Preliminaries}\label{sec:prelim}
We introduce some important definitions and results before presenting the main results. First, for convenience, we define the $k^{th}$ Chebyshev polynomial of the first kind as \cite{alma991000407698701591, alma9955873875801591,alma991519304401591}
\begin{equation}T_k(x)=\cos(k\cos^{-1}(x))\label{eq:cheb_poly}\end{equation}
where $\cos^{-1}=\arccos{}$. Chebyshev polynomials are orthogonal in the following sense: \cite{alma9955873875801591}
\begin{equation}
    \int_{-1}^1 T_k(x)T_l(x)w(x)\text{d}x = 0 \text{ when } k\neq l \text{ and } w(x)=\frac{1}{\sqrt{1-x^2}}
\end{equation}
The integral of a Chebyshev polynomial over $[-1,1]$ is given by \cite{alma99605834401591}
\begin{equation}
    \int_{-1}^1T_k(x)\text{d}x = \begin{cases} \frac{(-1)^k+1}{1-k^2}& k\neq 1 \\ 0 & k = 1 \end{cases} \label{eq:cheby_integral}
\end{equation}

The following identities for the sums of cosines and sines will be useful later. These are standard results from trigonometry \cite{alma99605834401591}.
\begin{subequations}\begin{align}
\sum_{j=1}^n\cos(j x) &=\frac{\sin(nx/2)\cos((n+1)x/2)}{\sin(x/2)}\\\sum_{j=1}^n\sin(j x)&=\frac{\sin(nx/2)\sin((n+1)x/2)}{\sin(x/2)} \label{eq:sum_sines}\end{align}
\end{subequations}

Once the above has been collected we can proceed by stating the main results and the proof.

\section{Main results}
\label{sec:main}
As mentioned previously, we are interested in the distribution of $T_k(X)$ as $k$ gets large. In particular, under mild assumptions, the following fact holds: $T_k(X)$ will converge in distribution to a random variable whose density function is the same as the weight function for Chebyshev polynomials. 

\begin{theorem}
  Suppose $X$ is a continuous random variable with density function $f_X$ on $[-1,1]$. As it is continuous, it can be expanded as a Chebyshev series that converges absolutely \cite{alma991000407698701591,alma991000347246301591}
\begin{equation}
    f_X(x)=\sum_{l=0}^\infty\mu_l T_l(x) \label{eq:cheb_series}
\end{equation}
then $T_k(X)$ converges in distribution to a random variable $\Gamma$ with probability density function as $k\rightarrow \infty$
$$f_\Gamma(z)=\frac{1}{\pi}\frac{1}{\sqrt{1-z^2}}$$
\end{theorem}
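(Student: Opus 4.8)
\bigskip

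The plan is to prove convergence in distribution by the method of moments, exploiting the composition (semigroup) identity for Chebyshev polynomials. Because $T_m(T_k(x)) = T_{mk}(x)$ for all integers $m,k\ge 0$ — immediate from \cref{eq:cheb_poly}, since $T_m(\cos(k\theta)) = \cos(mk\theta)$, and then extended to all $x$ by polynomial identity — the ``Chebyshev moments'' of $T_k(X)$ collapse:
$$\mathbb{E}\left[T_m\bigl(T_k(X)\bigr)\right] = \mathbb{E}\left[T_{mk}(X)\right] = \int_{-1}^{1} T_{mk}(x)\, f_X(x)\,\mathrm{d}x.$$
Since $\{T_m\}_{m\ge 0}$ is a basis for the space of polynomials, controlling these quantities controls every ordinary moment $\mathbb{E}[(T_k(X))^m]$; and because $\Gamma$ is supported on the compact interval $[-1,1]$ it is determined uniquely by its moment sequence, while every $T_k(X)$ is likewise supported in $[-1,1]$ (as $T_k$ maps $[-1,1]$ into itself). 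Hence it suffices to show $\mathbb{E}[T_m(T_k(X))] \to \mathbb{E}[T_m(\Gamma)]$ as $k\to\infty$ for each fixed $m$: this forces $\mathbb{E}[p(T_k(X))]\to \mathbb{E}[p(\Gamma)]$ for every polynomial $p$, and then, by the Weierstrass approximation theorem, $\mathbb{E}[g(T_k(X))]\to\mathbb{E}[g(\Gamma)]$ for every $g\in C[-1,1]$, which is exactly convergence in distribution.

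For the left-hand side, substitute the Chebyshev expansion \cref{eq:cheb_series} and interchange summation and integration:
$$\mathbb{E}\left[T_{mk}(X)\right] = \sum_{l=0}^{\infty}\mu_l \int_{-1}^{1} T_{mk}(x) T_l(x)\,\mathrm{d}x,$$
the interchange being legitimate because $\sum_l|\mu_l|<\infty$ and $\left|\int_{-1}^1 T_{mk}T_l\,\mathrm{d}x\right|\le 2$ uniformly. The change of variables $x=\cos\theta$ turns the inner integral into $\int_0^{\pi}\cos(mk\theta)\cos(l\theta)\sin\theta\,\mathrm{d}\theta$; expanding the product of cosines and integrating the resulting sine terms elementarily shows this is $O\!\left((mk-l)^{-2}\right) + O\!\left((mk+l)^{-2}\right)$ once $mk>l+1$, and is bounded by $2$ in all cases. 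Thus for each fixed $l$ (and each fixed $m\ge 1$) the $l$-th term tends to $0$ as $k\to\infty$, while being dominated by the summable sequence $2|\mu_l|$; dominated convergence for series then gives $\mathbb{E}[T_m(T_k(X))]\to 0$ for every fixed $m\ge 1$. Since trivially $\mathbb{E}[T_0(T_k(X))]=1$, and since $\mathbb{E}[T_m(\Gamma)] = \int_{-1}^1 T_m(z)\,\frac{1}{\pi}(1-z^2)^{-1/2}\,\mathrm{d}z$ equals $0$ for $m\ge 1$ (orthogonality of $T_m$ to $T_0$ with respect to $w$) and $1$ for $m=0$, the required moment convergence holds and $T_k(X)$ converges in distribution to $\Gamma$.

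I expect the routine-looking but genuinely load-bearing step to be the oscillatory estimate $\int_{-1}^1 T_{mk}(x)T_l(x)\,\mathrm{d}x\to 0$, with enough uniformity in $l$ to survive the infinite sum: this is in essence a quantitative Riemann--Lebesgue statement, and I suspect it is the ``cornerstone lemma'' advertised in the introduction, the trigonometric sum identities of \cref{sec:prelim} being precisely what one uses to make it quantitative and to read off an error rate in terms of the decay of $(\mu_l)$. A more hands-on alternative avoids moments and works directly with the cumulative distribution function: writing $z=\cos\alpha$ and $x=\cos\theta$ one has $\mathbb{P}(T_k(X)\le z) = \int_0^{\pi}\mathbf{1}[\cos(k\theta)\le\cos\alpha]\,f_X(\cos\theta)\sin\theta\,\mathrm{d}\theta$, and the rapidly oscillating indicator averages out to its mean $h = 1 - \frac{1}{\pi}\cos^{-1}(z)$, giving $\mathbb{P}(T_k(X)\le z)\to h = F_\Gamma(z)$; the extra difficulty on this route is that the Fourier series of the indicator is only conditionally convergent, so one again needs the partial-sum identities of \cref{sec:prelim} (Dirichlet-kernel bounds, summation by parts) to justify passing to the limit.
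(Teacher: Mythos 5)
Your argument is correct, but it takes a genuinely different route from the paper. The paper first derives an exact formula for the density of $T_k(X)$ (the lemma stated in the main section and proved in the appendix, by tracking the preimages of $\{\cos(k\theta)\le z\}$ through the intermediate variable $\Psi_k=k\cos^{-1}(X)$), and then shows the resulting sum $S_k(z)$ converges pointwise to $1/\pi$ by substituting the Chebyshev expansion of $f_X$, evaluating the inner trigonometric sums in closed form separately for $l=0$, $l=1$ and $l\ge 2$, and invoking the normalization constraint $2\mu_0+\sum_{l\ge2}\mu_l\frac{(-1)^l+1}{1-l^2}=1$. You instead exploit the nesting identity $T_m\circ T_k=T_{mk}$ to reduce the whole problem to the decay of $\int_{-1}^1 T_n(x)f_X(x)\,\mathrm{d}x$ as $n\to\infty$ --- essentially a quantitative Riemann--Lebesgue statement --- and conclude by Weierstrass approximation; the cornerstone lemma you conjectured the paper relies on is in fact the exact density formula, not the oscillatory integral estimate. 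Your route is shorter, sidesteps the interval bookkeeping of the lemma entirely, and actually needs less than the paper assumes: $\int_{-1}^1 T_nf_X\,\mathrm{d}x\to0$ holds for any $L^1$ density by Riemann--Lebesgue after the substitution $x=\cos\theta$, so your argument would also cover the discontinuous example flagged in the paper's final remarks, where the absolutely convergent Chebyshev expansion is unavailable. What the paper's approach buys in exchange is an explicit finite-$k$ density, which underpins its $O(1/k^2)$ asymptotic error analysis and the numerical figures; your version recovers the same rate at the level of Chebyshev moments (each $\mathbb{E}\left[T_m(T_k(X))\right]$ is $O(1/k^2)$ for fixed $m$) but not a pointwise statement about the density itself.
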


This limiting random variable $\Gamma$ is a scaled and shifted Beta distribution with parameters $\alpha=\beta=\frac{1}{2}$. In particular, if $Y\sim \operatorname{Beta}(\alpha=\frac{1}{2},\beta=\frac{1}{2})$, then $\Gamma=2Y-1$.

For a more general $X$ defined on $[a,b]$ we can re-scale the  Chebyshev polynomials on that interval, with a suitably re-defined weight function as well. The theorem rests on the following result, which provides us with an analytic formula for the probability density function of $T_k(X)$

\begin{lemma}\label{lemma}
Let $f_X$ and $F_X$ be the pdf and cdf of $X$, a continuous random variable define on $[-1,1]$. Then the probability density function of $T_k(X)$ when $k=2m$ is even is
\begin{equation}
\begin{split}
    f_k(z) &=\frac{1}{\sqrt{1-z^2}}\sum_{j=1}^{m=\frac{k}{2}} \left\{f_{\Psi_k}(2\pi j -\cos^{-1}(z))+f_{\Psi_k}(2\pi (j-1) +\cos^{-1}(z))\right\}\\
    &= \frac{1}{\sqrt{1-z^2}}S_k(z)
\end{split}
\end{equation}
where $S_k(z)$ is defined as the sum in the top line. When $k=2m+1$ is odd there is an extra term in the sum
\begin{subequations}
\begin{align}
    f_k(z)=&\frac{1}{\sqrt{1-z^2}}\sum_{j=1}^{m=\frac{k-1}{2}} \{f_{\Psi_k}(2\pi j -\cos^{-1}(z))+f_{\Psi_k}(2\pi (j-1) +\cos^{-1}(z))\} \nonumber\\
    +& \frac{1}{\sqrt{1-z^2}}f_{\Psi_k}(2\pi m +\cos^{-1}(z))\nonumber
    \end{align}
\end{subequations}
where $f_{\Psi_k}$ is the pdf of $\Psi_k = k \cos^{-1}(X)$, so that
\begin{displaymath}
f_{\Psi_k}(\psi)=\frac{1}{k}f_X(\cos(\psi/k))\sin(\psi/k)
\end{displaymath}
\end{lemma}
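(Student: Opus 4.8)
The plan is to push the law of $X$ through the angle substitution $\Theta=\cos^{-1}(X)$. Since $T_k(X)=\cos(k\cos^{-1}(X))=\cos(\Psi_k)$ with $\Psi_k=k\Theta$, the whole statement becomes a computation of the distribution of $\cos(\Psi_k)$. First I would record that $\Theta$ takes values in $[0,\pi]$ with density $f_\Theta(\theta)=f_X(\cos\theta)\sin\theta$ (so the stated formula $f_{\Psi_k}(\psi)=\tfrac1k f_X(\cos(\psi/k))\sin(\psi/k)$ is just the rescaling $f_{\Psi_k}(\psi)=\frac1k f_\Theta(\psi/k)$), and that $\Psi_k$ is supported on $[0,k\pi]$.

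Next, fix $z\in(-1,1)$ and set $\alpha=\cos^{-1}(z)\in(0,\pi)$. I would compute the cdf $F_k(z)=\mathbb{P}(\cos\Psi_k\le z)$ by describing the sublevel set explicitly: $\{\psi:\cos\psi\le z\}\cap[0,k\pi]$ is the disjoint union, over integers $n$, of the intervals $[\,2\pi n+\alpha,\ 2\pi(n+1)-\alpha\,]$ clipped to $[0,k\pi]$. Hence $F_k(z)$ is a finite sum of increments $F_{\Psi_k}(2\pi(n+1)-\alpha)-F_{\Psi_k}(2\pi n+\alpha)$ over the relevant $n$. Differentiating in $z$ and using $\frac{d}{dz}\cos^{-1}(z)=-1/\sqrt{1-z^2}$, the chain rule produces the common factor $1/\sqrt{1-z^2}$, and the sign works out so that both the left and the right endpoint of every interval contribute with a plus sign; a clipped endpoint is constant in $z$ and contributes nothing. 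Since $f_\Theta$, hence $f_{\Psi_k}$, is continuous, $F_{\Psi_k}$ is $C^1$ and this term-by-term differentiation is legitimate, giving $f_k(z)=\frac{1}{\sqrt{1-z^2}}\sum_n\{f_{\Psi_k}(2\pi n+\alpha)+f_{\Psi_k}(2\pi(n+1)-\alpha)\}$, a function continuous on $(-1,1)$, hence the density there. (Equivalently one may argue geometrically: on each interval of monotonicity of $\cos$ inside $[0,k\pi]$ the map is a diffeomorphism onto $(-1,1)$ with $|\frac{d}{d\psi}\cos\psi|=|\sin\psi|=\sqrt{1-z^2}$ at every preimage of $z$, and one sums the local inverse-function contributions $f_{\Psi_k}(\psi)/\sqrt{1-z^2}$.)

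It then remains to identify the admissible $n$ and reindex. The constraint $2\pi n+\alpha\in(0,k\pi)$ forces $n\ge 0$ and $n<k/2$, and using $\alpha/(2\pi)\in(0,\tfrac12)$ pins the ranges down for $k=2m$ and $k=2m+1$. When $k=2m$ both endpoint families run over $n=0,\dots,m-1$; the substitution $j=n+1$ turns the sum into $\sum_{j=1}^m\{f_{\Psi_k}(2\pi(j-1)+\cos^{-1}(z))+f_{\Psi_k}(2\pi j-\cos^{-1}(z))\}=S_k(z)$. When $k=2m+1$ the left-endpoint family runs over $n=0,\dots,m$ while the right-endpoint family runs over $n=0,\dots,m-1$; splitting off the extra term $f_{\Psi_k}(2\pi m+\cos^{-1}(z))$ — the unique solution of $\cos\psi=z$ in the leftover half period $[2\pi m,(2m+1)\pi]$ — and pairing the rest yields exactly the stated odd-$k$ formula.

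The elementary description of the sublevel set and the chain-rule step are routine; the only real care is in the index bookkeeping — getting the endpoints of the $n$-ranges right in the two parity cases and correctly locating the unpaired term for odd $k$. I would also note that the formula is asserted for $z\in(-1,1)$: the endpoints $z=\pm1$ form a null set, and are precisely where the $1/\sqrt{1-z^2}$ factor (the eventual Chebyshev weight) becomes singular, so they are irrelevant to the distribution.
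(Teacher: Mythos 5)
Your proposal is correct and follows essentially the same route as the paper's appendix proof: the angle substitution $\Theta=\cos^{-1}(X)$, the description of the sublevel set $\{\cos\psi\le z\}$ as a disjoint union of intervals $[2\pi(j-1)+\cos^{-1}(z),\,2\pi j-\cos^{-1}(z)]$, and term-by-term differentiation of the resulting cdf sum via the chain rule on $\cos^{-1}(z)$. If anything, your treatment is slightly more complete, since you carry out the index bookkeeping for the odd-$k$ clipped interval explicitly, whereas the paper only remarks that an extra term appears.
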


\subsection{Asymptotic analysis of the convergence}
If we expand the expression for the complete sum in the general case, \cref{eq:full_sum}, around $k\rightarrow\infty$ we obtain the following expansion, 
\begin{equation}
\begin{split}
S_k(z) &\approx \frac{1}{\pi}+\frac{1}{k^2}\left(\frac{\pi}{3}-\frac{(\pi - \cos^{-1}(z))^2}{\pi}\right)\left[\mu_0+\sum_{l=2}^\infty \left\{\mu_l\cos\left(\frac{l\pi}{2}\right)^2\right\}\right]+\mathcal{O}\left(\frac{1}{k^4}\right)\\
&= \frac{1}{\pi}+\frac{1}{k^2}\left(\frac{\pi}{3}-\frac{(\pi - \cos^{-1}(z))^2}{\pi}\right)\sum_{l=0}^\infty\mu_{2l}+\mathcal{O}\left(\frac{1}{k^4}\right)
\end{split}
\end{equation}

In particular, we see that the error is quadratic in $\frac{1}{k}$, thus explaining the rapid convergence found numerically in the upcoming sections. Furthermore, only the even Chebyshev moments contribute to the error. 

\section{Numerical Results}
\subsection{Dance around the origin}
For distributions where the bulk of the probability mass is centered at the origin (i.e. Gaussian, Cauchy, ...), the second Chebyshev polynomial $T_2(t)=2t^2-1$ will map most of that probability mass to $T_2(0)=-1$, so the distribution will become skewed to the left. From there on, as $T_k(1)=1$ and $T_k(-1)=(-1)^{k}$, the probability mass will remain mostly on $t=-1$ for $k=3$, but will migrate to $t=1$ at $k=4$, and then oscillate between $-1$ and $1$ changing place every two iterations, but loosing skewedness as $k$ increases, as we know that the distribution eventually converges to $\frac{1}{\pi\sqrt{1-t^2}}$ which is symmetric. This oscillation is visualized in \cref{fig:dance}, where the initial distribution is a Gaussian. 

\begin{figure}[H]
    \centering
    \includegraphics[scale=0.65]{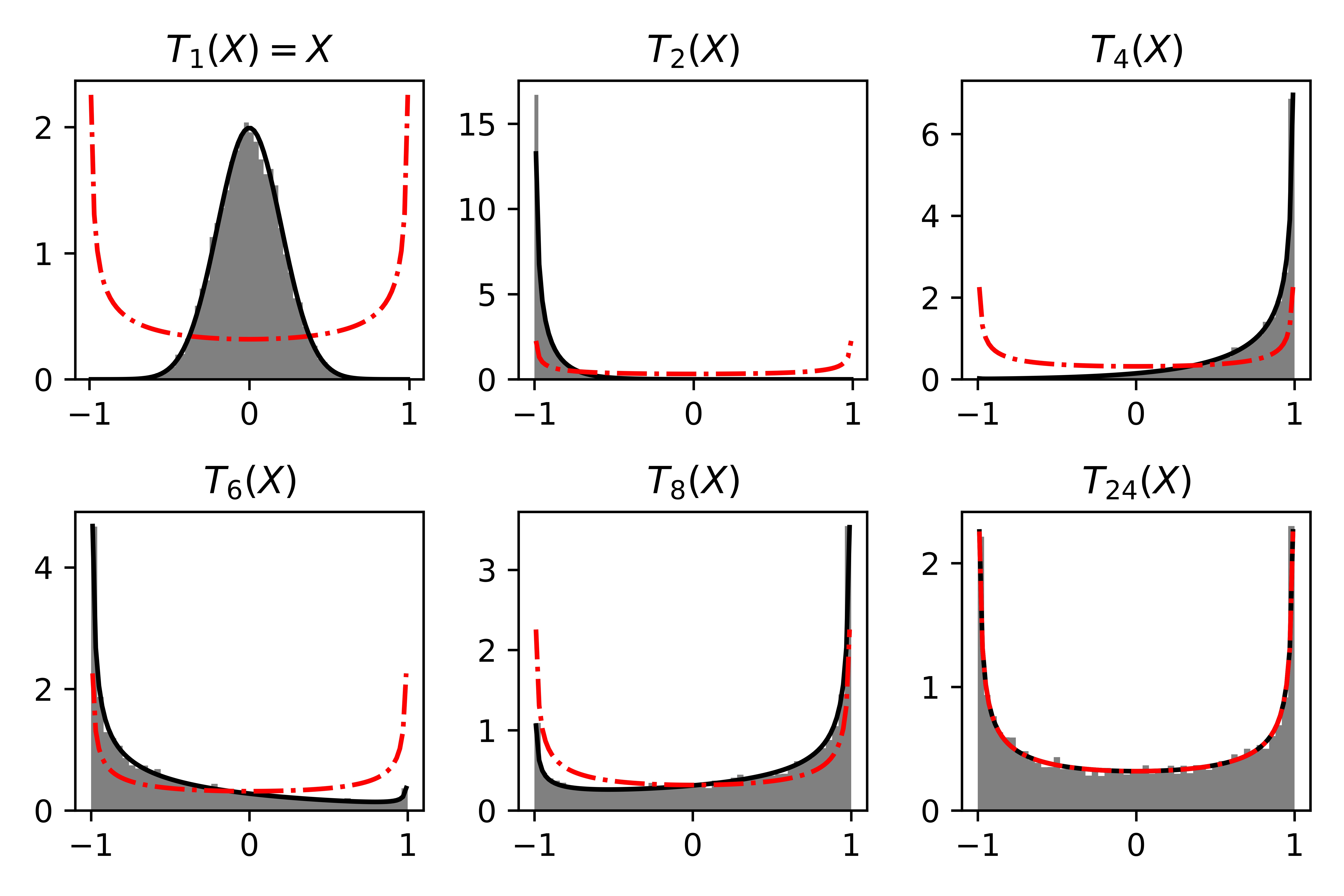}
    \caption{Dance around the origin for an initial Gaussian distribution. The solid black lines represent the analytic solution as given by \cref{lemma} and the dashed red line represents the limiting distribution}
    \label{fig:dance}
\end{figure}

In \cref{fig:dance} we further observe that after only 24 iterations the distribution has essentially converged (very quickly!), indeed, as we will see in the next section this is because the convergence is quadratic in $\frac{1}{k}$, so that the error decreases with $\mathcal{O}(\frac{1}{k^2})$.

\section{Proofs for the main results}

Before proving the general case we will show the proof for the case $X\sim \Gamma$ (in this particular case, instead of convergence we will have that the distribution is invariant, i.e. $\Gamma=T_k(\Gamma)$ in distribution. For this it is necessary to obtain a general formula for the probability density function of $T_k(X)$, denoted as $f_k(z)$. More formally:
\begin{theorem}[The invariant measure] If $X\sim \operatorname{Beta}(1/2,1/2)$ then $T_k(X)\sim \operatorname{Beta}(1/2,1/2)$. This means that the distribution $ \operatorname{Beta}(1/2,1/2)$ is invariant under the transformation $T_k:[-1,1]\longrightarrow[-1,1]$
\end{theorem}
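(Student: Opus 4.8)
The plan is to verify directly that if $X$ has density $f_\Gamma(x) = \frac{1}{\pi}\frac{1}{\sqrt{1-x^2}}$ on $[-1,1]$, then $T_k(X)$ has the same density. The cleanest route is to note that $\operatorname{Beta}(1/2,1/2)$ (scaled to $[-1,1]$) is precisely the pushforward of the uniform distribution on $[0,2\pi)$ under the map $\theta \mapsto \cos\theta$; equivalently, if $\Theta \sim \operatorname{Unif}[0,\pi]$ then $\cos\Theta$ has density $f_\Gamma$. So I would first establish this fact with a one-line change of variables: for $\Theta$ uniform on $[0,\pi]$ and $Z = \cos\Theta$, we get $f_Z(z) = f_\Theta(\cos^{-1}z)\left|\frac{d}{dz}\cos^{-1}z\right| = \frac{1}{\pi}\cdot\frac{1}{\sqrt{1-z^2}}$, which is exactly $f_\Gamma$.

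Next I would use the defining identity $T_k(\cos\theta) = \cos(k\theta)$ from \cref{eq:cheb_poly}. Writing $X = \cos\Theta$ with $\Theta \sim \operatorname{Unif}[0,\pi]$, we have $T_k(X) = T_k(\cos\Theta) = \cos(k\Theta)$. The key step is then to show that $\cos(k\Theta)$ has the same distribution as $\cos\Theta'$ for $\Theta' \sim \operatorname{Unif}[0,\pi]$. Since $\cos$ is $2\pi$-periodic and even, $\cos(k\Theta)$ depends only on $k\Theta \bmod 2\pi$ folded into $[0,\pi]$; it therefore suffices to show that $k\Theta \bmod 2\pi$ is uniform on $[0,2\pi)$ when $\Theta$ is uniform on $[0,\pi]$, or more carefully, that the induced distribution on $[0,\pi]$ after folding is uniform. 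Concretely, partition $[0,\pi]$ into the $k$ subintervals $I_j = [\,(j-1)\pi/k,\ j\pi/k\,)$ for $j = 1,\dots,k$; on each $I_j$ the map $\theta \mapsto \cos(k\theta)$ is a monotone bijection onto $(-1,1)$, alternating between decreasing and increasing, and $\Theta$ restricted to $I_j$ (conditioned on $\Theta \in I_j$, which has probability $1/k$) is uniform on $I_j$. A change of variables on each piece shows the conditional density of $\cos(k\Theta)$ given $\Theta \in I_j$ equals $\frac{1}{\pi}\frac{1}{\sqrt{1-z^2}}$ — independent of $j$ — and summing the $k$ pieces weighted by $1/k$ reproduces $f_\Gamma$.

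Alternatively, and perhaps more slickly, I would invoke \cref{lemma} directly: with $f_X = f_\Gamma$ one computes $f_{\Psi_k}(\psi) = \frac{1}{k}\cdot\frac{1}{\pi}\cdot\frac{1}{|\sin(\psi/k)|}\cdot\sin(\psi/k) = \frac{1}{k\pi}$ (constant) on the relevant range $\psi \in [0, k\pi]$, so that each term $f_{\Psi_k}(\,\cdot\,)$ contributes $\frac{1}{k\pi}$ and the sum $S_k(z)$ has exactly $k$ such terms (whether $k$ is even or odd, counting the extra odd term), giving $S_k(z) = k \cdot \frac{1}{k\pi} = \frac{1}{\pi}$, hence $f_k(z) = \frac{1}{\pi}\frac{1}{\sqrt{1-z^2}} = f_\Gamma(z)$. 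I would present this as the main argument since \cref{lemma} is already available, and mention the change-of-variables picture as intuition. The main obstacle is bookkeeping: making sure the arguments $2\pi j - \cos^{-1}(z)$ and $2\pi(j-1) + \cos^{-1}(z)$ all land in $[0,k\pi]$ where the formula $f_{\Psi_k} \equiv \frac{1}{k\pi}$ is valid, and confirming the term count is exactly $k$ in both parity cases — this is where a careless off-by-one would break the identity.
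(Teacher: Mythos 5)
Your main argument is exactly the paper's proof: apply \cref{lemma}, observe that $f_{\Psi_k}(\psi)=\frac{1}{k}f_X(\cos(\psi/k))\sin(\psi/k)=\frac{1}{k\pi}$ is constant when $f_X=\frac{1}{\pi\sqrt{1-x^2}}$, and count exactly $k$ terms in $S_k(z)$ in both parity cases to get $S_k(z)=\frac{1}{\pi}$. The change-of-variables picture via $\Theta\sim\operatorname{Unif}[0,\pi]$ and $T_k(\cos\Theta)=\cos(k\Theta)$ is a nice self-contained alternative, but the argument you designate as primary coincides with the paper's.
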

\begin{proof}
  In this case
$$f_X(x)=\frac{1}{\pi\sqrt{1-x^2}}$$
then 
$$f_{\Psi_k}(\psi)=f_\Theta(\psi/k)/k=\frac{1}{k}f_X(\cos(\psi/k))\sin(\psi/k)=\frac{\sin(\psi/k)}{\pi k\sqrt{1-\cos^2(\psi/k)}}=\frac{1}{k\pi}$$
So this is a Uniform distribution!! (which depends on $k$)
Finally, as $f_{\Psi_k}(\psi)$ is constant, the sum is really simple in the case $k$ even.
$$f_k(z)=\frac{1}{\sqrt{1-z^2}}\sum_{j=1}^m f_{\Psi_k}(2\pi j -\cos^{-1}(z))+f_{\Psi_k}(2\pi (j-1) +\cos^{-1}(z))=$$
$$=\frac{1}{\sqrt{1-z^2}}\sum_{j=1}^{k/2}(\frac{1}{k\pi}+\frac{1}{k\pi})=\frac{2}{k\pi\sqrt{1-z^2}}\sum_{j=1}^{k/2}1=\frac{1}{\pi\sqrt{1-z^2}}$$
For $k$ odd we will just have one extra term (but $k$ is one unit greater) so the result is the same and we are done.
\end{proof}

We can now turn our attention to the proof for the general statement. We will do this by first exploring the constraint on the 
Chebyshev moments that arises from normalisation, and we will use that to our advantage to show that the limit in \cref{eq:limit_sum} is indeed $\frac{1}{\pi}$. 

\begin{proof}[General case]
We have that 
\begin{displaymath}
f_X(x)=\sum_{l=0}^\infty\mu_l T_l(x)
\end{displaymath}
But $f_X$ is a pdf so in particular it satisfies
\begin{displaymath}
1 = \int_{-1}^1f_X(x)dx=\sum_{l=0}^\infty\mu_l\int_{-1}^1T_l(x)dx
\end{displaymath}
We can use \cref{eq:cheby_integral} so that the constraint becomes
\begin{equation}
    1 = 2\mu_0+\sum_{l=2}^\infty\mu_l\frac{(-1)^l+1}{1-l^2} \label{eq:constraint_general}
\end{equation}
This will be useful later. We will now proceed as in the simpler cases, by computing the limit in \cref{eq:limit_sum}. 
\begin{displaymath}
f_{\Psi_k}(\psi)=\frac{\sin(\psi/k)}{k}f_X(\cos(\psi/k)=\frac{\sin(\psi/k)}{k}\sum_{l=0}^\infty \mu_lT_l(\cos(\psi/k))
\end{displaymath}
But $T_l(\cos(\psi/k))=\cos\left(\frac{l}{k}\psi\right)$ so that
\begin{displaymath}
f_{\Psi_k}(\psi)=\frac{\sin(\psi/k)}{k}\sum_{l=0}^\infty \mu_l\cos(l\psi/k)
\end{displaymath}

If we focus on $k=2m$ even, then the limit is 
\begin{align}\sum_{j=1}^{k/2} &f_{\Psi_k}(2\pi j -\cos^{-1}(z))+f_{\Psi_k}(2\pi (j-1) +\cos^{-1}(z))=\nonumber\\
= \frac{1}{k}\sum_{j=1}^{k/2} \sum_{l=0}^\infty&\mu_l \sin(x_k j-\beta_k)\cos(l(x_kj -\beta_k)) +\sin((x_k(j-1) +\beta_k)\cos(l(x_k (j-1) +\beta_k)
\end{align}
defining $\beta_k =\cos^{-1}(z)$ as before and $x_k =\frac{2\pi}{k}$. We can exchange the order of summation as the infinite series converges absolutely. 
\begin{displaymath}
 \sum_{l=0}^\infty \mu_l \frac{1}{k}\sum_{j=1}^{k/2}\left\{\sin(x_k j-\beta_k)\cos(l(x_kj -\beta_k)) +\sin(x_k(j-1) +\beta_k)\cos(l(x_k (j-1) +\beta_k))\right\}
\end{displaymath}
thus, we can focus by fixing $l$ and computing the limit of the inner sum as $k$ gets large. We will do this for three separate cases: $l=0,l=1$ and $l\geq2$. For $l=0$ the sum is simplified as the cosines have zero argument so become factors of 1. Thus we have to consider
\begin{displaymath}
 \frac{1}{k}\sum_{j=1}^{k/2}\left\{\sin(x_k j-\beta_k) +\sin(x_k(j-1) +\beta_k)\right\}=\frac{2\cos\left(\frac{\pi-\cos^{-1}(z)}{k}\right)}{k\sin(\pi/k)}
\end{displaymath}
In the limit as $k\rightarrow\infty$ this becomes $\frac{2}{\pi}$. We can see this because $\sin(\pi/k)\sim\frac{\pi}{k}$ and the cosine in the denominator converges to 1 as the argument converges to zero. 

Moving on to the case $l=1$ we can regroup the terms and obtain that the sum (and thus the limit) is identically zero. The summand in this case can be expressed as 
\begin{align*}
\sin(x_k j-\beta_k)\cos(x_kj -\beta_k) +\sin(x_k(j-1) +\beta_k)\cos(x_k (j-1) +\beta_k)=
    \\=\cos\left(\frac{2(\pi-\cos^{-1}(z)}{k}\right)\sin\left(\frac{2\pi(2j-1)}{k}\right)
\end{align*}
So when we sum over $j$ the first term can be moved outside as a constant and the second term, using the formula in \cref{eq:sum_sines} gives us 0. 

Finally, for the case $l\geq 2$, the sum will give us
\begin{align*}
\frac{\cos(l\pi/2)^2}{2k\sin(\pi/k(l+1))\sin(\pi/k(1-l))}[\sin((2\pi+(l-1)\cos^{-1}(z))/k)+\\+\sin((2\pi l +(1-l)\cos^{-1}(z))/k)+\sin((2\pi-(l+1)\cos^{-1}(z))/k)+\\+\sin((-2\pi l+(l+1)\cos^{-1}(z))/k)
]
\end{align*}

As all the sines have an argument that is divided by $k$, in the limit they can be replaced by their argument. This means there is massive cancellation. In particular, in the limit, 
\begin{align*}
    \frac{k^2\cos(l\pi/2)^2}{2k\pi^2(1-l^2)}[4\pi/k]=\frac{1}{\pi}\frac{1+\cos(l\pi)}{1-l^2}=\frac{1}{\pi}\frac{1+(-1)^l}{1-l^2}
\end{align*}
but this is precisely the result in \cref{eq:cheby_integral} multiplied by $\frac{1}{\pi}$. Upon assembling the different case we thus get that in the limit 
\begin{displaymath}
\frac{2}{\pi}\mu_0+0\cdot\mu_1+ \sum_{l=2}^\infty\mu_l \frac{1}{\pi}\frac{1+(-1)^l}{1-l^2}=\frac{1}{\pi}\left[2\mu_0+\sum_{l=2}^\infty\mu_l \frac{1+(-1)^l}{1-l^2}\right]=\frac{1}{\pi}
\end{displaymath}
from the constraint in \cref{eq:constraint_general}, so we are done.

\end{proof}

Before, moving on, we can reassemble the sum before passing onto the limit, for it will be useful later. 
\begin{subequations}
\begin{align}
\label{eq:full_sum}S_k(z)=\frac{2\cos\left(\frac{\pi-\cos^{-1}(z)}{k}\right)}{k\sin(\pi/k)}&+\\
+\frac{\cos(l\pi/2)^2}{2k\sin(\pi/k(l+1))\sin(\pi/k(1-l))}[\sin((2\pi+(l-1)\cos^{-1}(z))/k)+\\+\sin((2\pi l +(1-l)\cos^{-1}(z))/k)+\sin((2\pi-(l+1)\cos^{-1}(z))/k)+\\+\sin((-2\pi l+(l+1)\cos^{-1}(z))/k)
]
\end{align}
\end{subequations}

%\section{Discussion of \texorpdfstring{{\boldmath$Z=X \cup Y$}}{Z = X union Y}}

%\appendix
%\section{An example appendix} 

\section{Final Remarks}
All in all we have shown the main fact for mild assumptions on $X$, in particular that it has a Chebyshev Expansion. In particular, we showed that if the initial distribution is the same as the limiting then it is conserved. Furthermore, we have explored the oscillatory behaviour at the initial stages of the convergence. 

In the future it would be interesting to analytically study the convergence of discontinuous distributions, such as 
$$f_X(x)=\begin{cases}1 & 0<x<1 \\ 0 & \text{else}\end{cases}$$
as our numerical testing strongly suggests the same result applies, as seen in \cref{fig:disc}

\begin{figure}[H]
    \centering
    \includegraphics[scale=0.65]{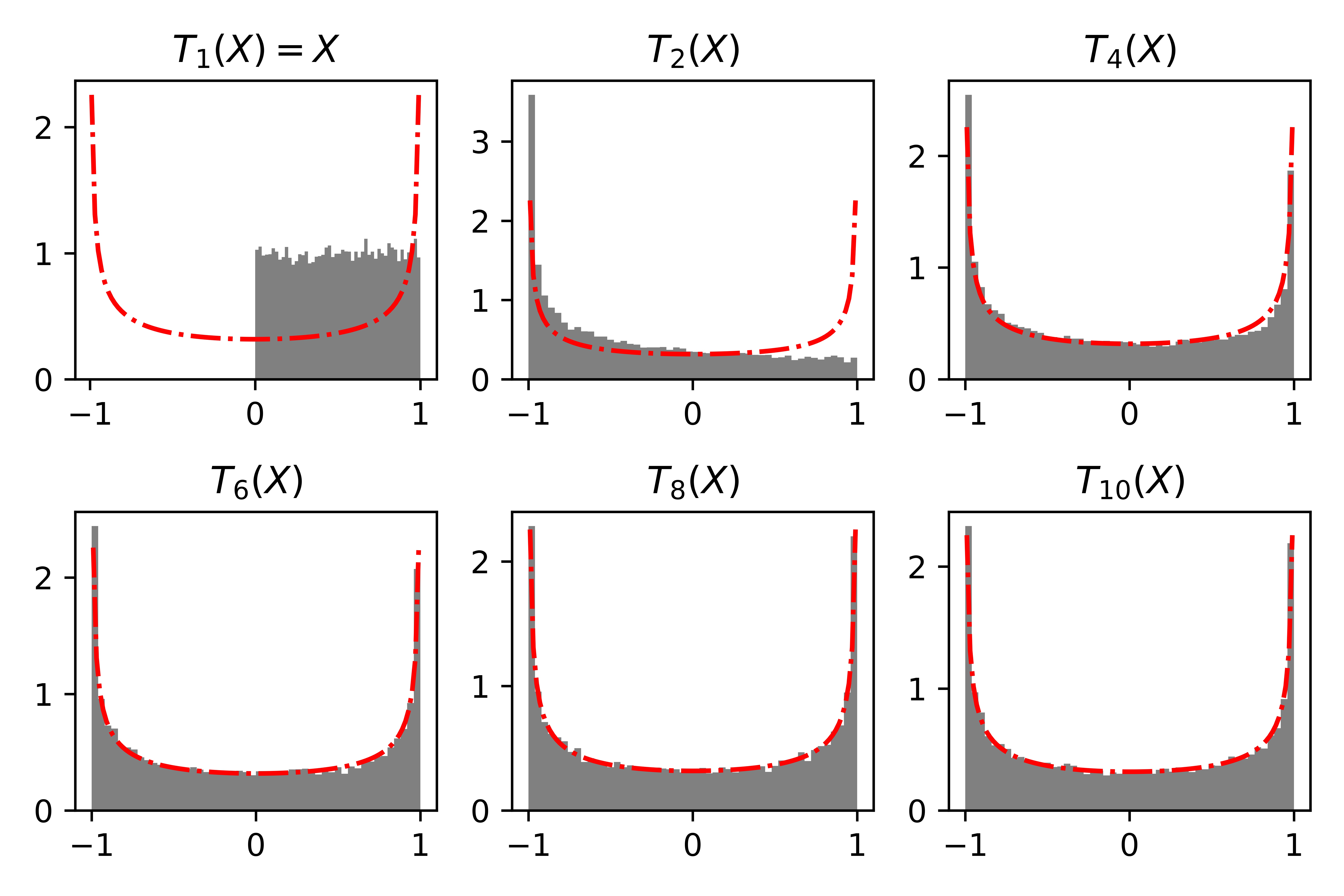}
    \caption{Numerical evidence for the convergence for a discontinuous random variable. }
    \label{fig:disc}
\end{figure}

Furthermore, we would like to explore how this result can be used to accelerate bayesian methods spectral methods like \cite{CadonnaAnnalisa2017Bmmf}

A second possible area for further research is connecting the results presented herein with the theory of Koopman and Perron-Frobenius (Transfer) operators \cite{bruntonModernKoopmanTheory2021}. The main theorem in this paper is the limit for a particular transfer operator applied to the discrete dynamical system with governing equation given by the 3-term recurrence relation for Chebyshev polynomials \cite{alma991000407698701591}:
\begin{equation}
    x_{k+1} = 2 x_0 x_k-x_{k-1}
\end{equation}
where $x_0\in [-1,1]$ is the initial condition.

\section{Appendix}
A constructive proof of \cref{lemma} is first given, as it is essential for the proof of the main theorem. 
\begin{proof}
We want to compute 
$$T_k(X)=\cos(k\cos^{-1}(X))$$
We can motivate computing this quantity in two steps. First, figuring out the distribution of the "inner" part, $k\cos^{-1}(X)$, and then transforming this with the outer cosine. Thus, let $\Theta = \cos^{-1}(X)$. Then 
$$F_\Theta(\theta)= \Pr(\Theta\leq \theta)=\Pr(\cos^{-1}(X)\leq\theta)=\Pr(X\geq \cos(\theta))$$
as the cosine is monotonically decreasing in arccosine [-1,1], so we change the sign of the inequality. Then
$$F_\Theta(\theta)=1-\Pr(X\leq \cos(\theta))=1-F_X(\cos(\theta))$$
and thus, taking a derivative
$$f_\Theta(\theta)= f_X(\cos(\theta))\sin\theta$$
Now define $\Psi_k=k\Theta$. We can easily get the density and cumulative density of this new random variable, as it is just $\Theta$ scaled
$$F_{\Psi_k}(\psi)=F_\Theta(\psi/k)=1-F_X(\cos(\theta/k))$$
and taking a derivative
$$f_{\Psi_k}(\psi)=f_\Theta(\psi/k)/k=\frac{1}{k}f_X(\cos(\psi/k))\sin(\psi/k)$$
so we have completed the first step of the process.
Now we can compute, for $T_k(X)=\cos(\Psi_k)$.  However, we will have to be more careful with the transformation. Let $F_k$ be the cdf of $T_k(X)$, and $f_k$ its pdf as mentioned in the statement. Then
$$F_k(z)=\Pr(\cos(\Psi_k)\leq z)$$
Let's look at that inequality in more detail in \cref{fig:cosine_ineq}. For now, assume $k$ is even:
\begin{figure}[H]
    \centering
    \includegraphics[scale=.4]{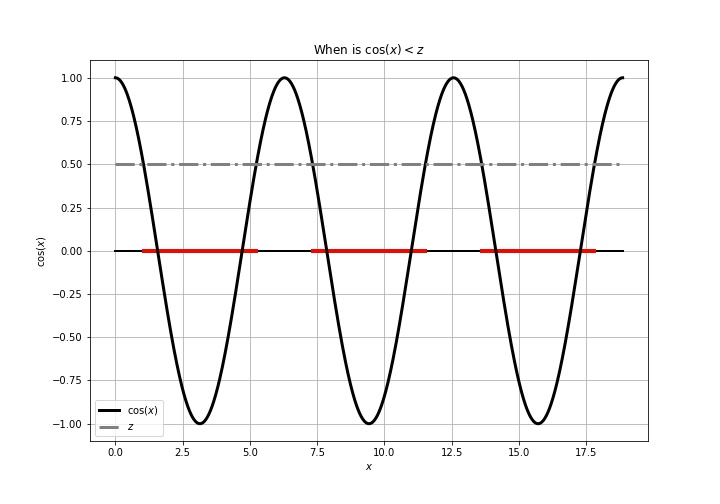}
    \caption{Cosine inequality visualized. It is satisfied on the regions of the $x$-axis highlighted in red.}
    \label{fig:cosine_ineq}
\end{figure}
So we can see it is satisfied in the intervals:

$$\cos^{-1}(z)\leq \Psi_k \leq 2\pi -\cos^{-1}(z)$$
$$2\pi +\cos^{-1}(z)\leq \Psi_k \leq 4\pi -\cos^{-1}(z)$$
$$\vdots$$
$$2\pi (j-1)+\cos^{-1}(z)\leq \Psi_k \leq 2\pi j -\cos^{-1}(z)$$
$$\vdots$$
$$2(k-1)\pi +\cos^{-1}(z)\leq \Psi_k \leq k\pi -\cos^{-1}(z)$$
Let $m=k/2$. Then, as the intervals are disjoint, we have that 
$$F_k(z)=\Pr(\cos(\Psi_k)\leq z)=\sum_{j=1}^m\Pr(2\pi (j-1)+\cos^{-1}(z)\leq \Psi_k \leq 2\pi j -\cos^{-1}(z))$$
We can express the probability for each summand in terms of the cdf of $\Psi_k$:
\begin{equation}
\begin{split}
    &\Pr(2\pi (j-1)+\cos^{-1}(z)\leq \Psi_k \leq 2\pi j -\cos^{-1}(z))\\=& \Pr(\Psi_k \leq 2\pi j -\cos^{-1}(z))-\Pr(\Psi_k \leq 2\pi (j-1) +\cos^{-1}(z))\\
    =& F_{\Psi_k}(2\pi j -\cos^{-1}(z))-F_{\Psi_k}(2\pi (j-1) +\cos^{-1}(z))
\end{split}
\end{equation}
so we get
$$F_k(z)=\sum_{j=1}^m F_{\Psi_k}(2\pi j -\cos^{-1}(z))-F_{\Psi_k}(2\pi (j-1) +\cos^{-1}(z))$$
taking a derivative
\begin{subequations}\begin{align} f_k(z)=\sum_{j=1}^m f_{\Psi_k}(2\pi j -\cos^{-1}(z))\frac{1}{\sqrt{1-z^2}}+f_{\Psi_k}(2\pi (j-1) +\cos^{-1}(z))\frac{1}{\sqrt{1-z^2}}=\\
=\frac{1}{\sqrt{1-z^2}}\sum_{j=1}^m f_{\Psi_k}(2\pi j -\cos^{-1}(z))+f_{\Psi_k}(2\pi (j-1) +\cos^{-1}(z))\end{align} \end{subequations}
Which is what we wanted to show.
\end{proof}
Hence we can already see the main part of the pdf we want to reach in the limit $k\rightarrow \infty$. Therefore, to proof the statement we just need to show that the sum converges (pointwise) to $1/\pi$. Thus we want to show that for a fixed $z$, 
\begin{equation}\lim_{k\rightarrow\infty}\left\{\sum_{j=1}^{k/2} f_{\Psi_k}(2\pi j -\cos^{-1}(z))+f_{\Psi_k}(2\pi (j-1) +\cos^{-1}(z))\right\}= \frac{1}{\pi}\label{eq:limit_sum}\end{equation}

For $k$ odd we just have an extra term in the sum. 
Before proving the general case we will look at two particular examples of great interest: $X$ uniformly distributed and $X\sim \operatorname{Beta}(1/2,1/2)$. For convenience, we define this sum we are taking the limit as $S_k(z)$

\begin{displaymath}
S_k(z)=\sum_{j=1}^{k/2} f_{\Psi_k}(2\pi j -\cos^{-1}(z))+f_{\Psi_k}(2\pi (j-1) +\cos^{-1}(z))
\end{displaymath}

\section*{Acknowledgments}
We would like to acknowledge the MIT Mathematics Department for their support during our UROP.

\bibliographystyle{siamplain}
\bibliography{references}
\end{document}